\newtheorem{thm}{Theorem}[section]
\newtheorem{cor}[thm]{Corollary}
\newtheorem{prop}[thm]{Proposition}
\theoremstyle{definition}
\newtheorem{defn}[thm]{Definition}
\newtheorem{exmp}[thm]{Example}
\newtheorem{ques}[thm]{Question}    %!!!!!!!!!!!!!!!!!!!!
\newtheorem*{ques*}{Question}
\newtheorem{rem}[thm]{Remark}          
\newtheorem*{ack}{Acknowledgment}      % \renewcommand{\theack}{} 
\newtheorem{defn-thm}[thm]{Definition--Theorem}  %!!!!!!!!!!!!!!!!!!!!!!!!
\newtheorem{defn-lem}[thm]{Definition--Lemma}  %!!!!!!!!!!!!!!!!!!!!!!!!
\theoremstyle{remark}
\newcommand{\sing}[0]{\operatorname{Sing}}
\newcommand{\pf}[0]{\operatorname{Pf}}
\def\loccoh#1.#2.#3.#4.{H^{#1}_{#2}(#3,#4)}
\DeclareMathAlphabet{\mathchanc}{OT1}{pzc}%
                                {m}{it}
\begin{document}
\bibliographystyle{amsalpha}

%\today

\title[Cubic forms having matrix factorizations by Hessian matrices]{Cubic forms having matrix factorizations by Hessian matrices}
\author{Yeongrak Kim}
\email{kim@math.uni-sb.de}
\address{F. Mathematik und Informatik, Universit{\"a}t des Saarlandes, Campus E2.4, D-66123 Saarbr{\"u}cken, Germany}

\begin{abstract}
Using a part of XJC-correspondence by Pirio and Russo, we classify cubic forms $f$ whose Hessian matrices induce matrix factorizations of themselves. When it defines a reduced hypersurface, it satisfies the ``secant--singularity'' correspondence, that is, it coincides with the secant locus of its singular locus. In particular, when $f$ is irreducible, its singular locus is either one of four Severi varieties.
\end{abstract}
\maketitle
\section{Introduction}

Let $k=\mathbb{C}$ be the field of complex numbers, $S=k[x_0, \cdots, x_n]$ be a polynomial ring, and let $f \in S$ be a nonzero homogeneous polynomial. Celebrating Hilbert syzygy theorem implies that the minimal $S$-free resolution of any finitely generated $S$-module terminates in finitely many steps. On the other hand, the minimal $R$-free resolution of a finitely generated module over the hypersurface ring $R$ may not terminate in general. For instance, let $S = k[x]$,  $f=x^2$, and let $R = S/(f)$.  The minimal $R$-free resolution of the module $k = S/(x)$ has infinite length even $k$ is finitely generated:
\[
\cdots \to R(-k) \stackrel{ \cdot x} \longrightarrow R(-k+1) \stackrel {\cdot x} \longrightarrow \cdots \longrightarrow R(-1) \stackrel{\cdot x} \longrightarrow R \to k \to 0.
\]
Eisenbud studied free resolutions over a hypersurface ring $R=S/(f)$, and found that every minimal $R$-free resolution of a finitely generated $R$-module becomes $2$-periodic after finitely many steps \cite[Theorem 6.1]{Eis80}. In particular, there are only two matrices $A$ and $B$ presenting boundary maps of the minimal $R$-free resolution after $(n+1)$-steps. By taking natural representatives, we may assume that the entries of $A$, $B$ are homogeneous polynomials in $S$ so that they induce graded $S$-module homomorphisms between free $S$-modules which degenerate along $f$. This leads to the notion of matrix factorizations of $f$. A pair of matrices $(A,B)$ is called a \emph{matrix factorization of $f$} when $A, B$ induce graded $S$-module homomorphisms and $AB = BA = f \cdot Id$. Notice that if $(A, B)$ and $(A, B^{\prime})$ are two matrix factorizations of $f$, then $B = B^{\prime}$ \cite[Theorem 6.1]{Eis80}, so we will simply say $A$ is a matrix factorization of $f$ when $(A,B)$ is a matrix factorization of $f$ for some matrix $B$. Matrix factorizations are essential to study the finitely generated modules and their resolutions over $R$. Recently, there are several attempts and applications of matrix factorizations including the connections between: ACM and Ulrich sheaves, categories of singularities, and categories of $D$-branes for Landau-Ginzburg $B$-models, as in some pioneering works \cite{ESW03, KL04, Orl04}. 

In most cases, finding a matrix factorization of a homogeneous polynomial $f$ is not obvious. For example, let $f = x_0 ^2 + \cdots + x_n^2$ be a general quadratic polynomial in $S$. Except for trivial matrix factorizations $(1,f)$ and $(f,1)$, a minimal possible nontrivial matrix factorization $A$ is induced by a spinor representation of $f$ (there are $1$ or $2$ up to the parity of $n$), indeed, $A$ is a square matrix of size $2^{\lfloor{\frac{n-2}{2}} \rfloor}$ \cite[Proposition 3.2]{BEH87}. It is known that every homogeneous polynomial $f$ admits a nontrivial matrix factorization \cite{BH87}, however, the smallest size of matrix factorizations is not exactly known even for a generic cubic polynomial. 

In many cases, the construction in \cite{BH87} only ensures a matrix factorization of huge size ($\sim d^{\binom{d+n}{n}}$), which is very far away from a minimal possible matrix factorization. Sometimes, a polynomial can have a matrix factorization with interesting properties. For example, a linearly determinantal polynomial $f$ (\emph{i.e.}, there is a linear matrix $A$ such that $\det A = f$) has a matrix factorization by its determinantal representation $A$. Similarly, a linearly Pfaffian polynomial $f$ (\emph{i.e.}, there is a linear skew-symmetric matrix $A$ such that $\pf A = f$) has a matrix factorization by its Pfaffian representation $A$. Hence, some polynomials admit matrix factorizations of small size, or having ``symmetry''. 

Hence, it is reasonable to ask which polynomials allow ``nice'' matrix factorizations. In this note, we study the Hessian matrix, appearing as a symmetric matrix factorization in the following motivating example. Let $f=x_0 x_1 x_2 \in k[x_0, x_1, x_2]$. It is clear that the Hessian matrix of $f$ gives a linear matrix factorization of $f$ itself. An effective idea to find such cubics is using special Cremona transformations. Indeed, the gradient map (or, the polar map) of $f$ is 
\[
\begin{array}{cccc}
\nabla f : & \mathbb{P}^2 & \dashrightarrow & \mathbb{P}^2 \\
& (x_0:x_1:x_2) & \mapsto & (x_1x_2 : x_0 x_2 : x_0 x_1)
\end{array}
\]
which is a Cremona transformation of $\mathbb{P}^2$. Note that both $\nabla f$ and its inverse are represented by quadratic polynomials. Ein and Shpeherd-Barron classified special quadro-quadratic Cremona transformations \cite[Theorem 2.6]{ESB89}. When the base locus of $\nabla f$ is smooth and irreducible, there are only four possible cases: the base loci are Severi varieties  $v_2 (\mathbb{P}^2) \subset \mathbb{P}^5, \mathbb{P}^2 \times \mathbb{P}^2 \subset \mathbb{P}^8, Gr(2,6) \subset \mathbb{P}^{14}$, and $\mathbb{OP}^2 \subset \mathbb{P}^{26}$, where the maps are given by the system of quadrics through four Severi varieties. They are \emph{prehomogeneous varieties}: there is a Zariski dense $G$-orbit  inside $\mathbb{P}V$ where $G$ is a semisimple algebraic group and $V$ is a representation of $G$. Indeed, such an orbit is the complement of a hypersurface $V(f) \subset \mathbb{P}V$, where $f$ is invariant under the $G$-action. In the case, such an $f$ becomes \emph{homaloidal}, that is, the gradient map $\nabla f$ gives a Cremona transformation $\mathbb{P}V \dashrightarrow \mathbb{P}V$. We refer \cite{ESB89, Dol00, EKP02} for more details and discussion on homaloidal polynomials and prehomogeneous varieties.

The above discussion leads to the following question:

\begin{ques}\label{ques:MatrixFactorizationByHessian}
Which cubic polynomials admit matrix factorizations by their Hessian matrices?
\end{ques}

When it happens, we may wildly expect that it comes from a prehomogeneous variety. Let us compare with four Severi varieties. In each case, there is a prehomogeneous group action $G$, either one of $SL(3), SL(3) \times SL(3), SL(6),$ or $E_6$ and a $G$-invariant cubic polynomial $f$ (unique up to constant multiples). The Severi variety $X$ appears as the singular locus of the cubic hypersurface $V(f) \subset \mathbb{P}^N$, and $V(f)$ coincides with the secant variety of $X$.  We may also ask the ``secant--singularity'' relation occurs for cubic polynomials which verify the above question. 

The main result of this paper is the following classification theorem, which answers to Question \ref{ques:MatrixFactorizationByHessian}:
\begin{thm}[see Theorem \ref{thm:HessianMFisEKPHomaloidal} and Corollary \ref{Cor:ClassificationCubicWithHessianMF}]
Let $f \in k[x_0, \cdots, x_n]$ be a homogeneous cubic form such that $\det \mathcal H (\log f) \neq 0$. Suppose that the Hessian matrix $\mathcal H(f)$ of $f$ induces a matrix factorization of $f$. Then $f$ is linearly equivalent to one of the followings:
\[
\left\{
\begin{array}{cl}
f \ = & x_0^3 \text{ in a single variable;}\\
f \ = & x_0^2x_1 \text{ in two variables;}\\
f \ = & x_0  (x_1^2 + \cdots + x_n^2) \text{ in $(n+1)$ variables;}\\
f \ = & \text{equation of the secant variety of the one of $4$ Severi varieties.} \end{array}
\right.
\]
\end{thm}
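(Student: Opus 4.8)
The plan is to prove the statement in two stages, mirroring the two referenced results: first reduce the matrix‑factorization hypothesis to a homaloidal (Jordan‑algebraic) condition on $f$, and then feed that condition into the classification coming from the XJC‑correspondence.

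I would begin by reformulating the hypothesis using Euler's identity. Since $f$ is cubic, $\mathcal{H}(f)$ is a symmetric matrix of linear forms, and differentiating $\sum_i x_i\partial_i f=3f$ once gives the relation that I will exploit throughout. Writing $\mathbf{x}=(x_0,\dots,x_n)^{T}$ and comparing degrees in $\det\mathcal{H}(f)\cdot\det B=f^{\,n+1}$ (which forces the entries of $B$ to be quadratic), multiplication of $B\,\mathcal{H}(f)=f\cdot\mathrm{Id}$ on the right by $\mathbf{x}$ yields
\[
\mathcal{H}(f)\,\mathbf{x}=2\,\nabla f,\qquad B\,\nabla f=\tfrac12\,f\,\mathbf{x}.
\]
I read the second identity as an inversion relation for the polar map $\nabla f$: the quadratic map recorded by $B$ sends the image of $\nabla f$ back to (a multiple of) the identity, so $\nabla f$ is birational and $f$ is homaloidal. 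Here the standing hypothesis $\det\mathcal{H}(\log f)\neq 0$ is exactly what guarantees that $f$ is not a cone, hence that $\nabla f$ is dominant and the relation above genuinely encodes a Cremona inverse.

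The core of the argument, corresponding to Theorem \ref{thm:HessianMFisEKPHomaloidal}, is to upgrade this single relation to the full adjoint structure of a rank‑$3$ cubic Jordan algebra. Setting $x^{\#}:=\nabla f(x)$ for the quadratic adjoint and reading $B$ as its linearization, I expect the symmetry of $\mathcal{H}(f)$ together with $\mathcal{H}(f)B=B\mathcal{H}(f)=f\cdot\mathrm{Id}$ to force the adjoint identity $(x^{\#})^{\#}=f(x)\,x$ and the compatibility of the trace bilinear form with $\mathcal{H}(f)$; this is precisely the assertion that $f$ is \emph{EKP‑homaloidal}. I expect this to be the main obstacle: one must promote a relation that a priori holds only after contracting against the Euler vector $\mathbf{x}$ to the genuinely quadratic Cremona involution and its algebra axioms, rather than merely its value along $\mathbf{x}$. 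I would handle this by polarizing the matrix‑factorization equation and using that the entries of $B$ are quadratic to recover the full bilinear (Freudenthal cross‑product) structure, not just its evaluation.

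Once $f$ is known to be EKP‑homaloidal, Corollary \ref{Cor:ClassificationCubicWithHessianMF} follows by invoking the Jordan‑algebra side of the Pirio--Russo XJC‑correspondence, which matches such cubics with the cubic norms of rank‑$3$ Jordan algebras, combined with the Ein--Shepherd‑Barron classification quoted in the introduction. When the base locus of $\nabla f$ is smooth and irreducible, the four simple algebras produce the four Severi varieties $v_2(\mathbb{P}^2)$, $\mathbb{P}^2\times\mathbb{P}^2$, $Gr(2,6)$, $\mathbb{OP}^2$, whose secant hypersurfaces are the corresponding cubics, while the remaining reducible or lower‑rank algebras yield exactly the degenerate normal forms $x_0^3$, $x_0^2x_1$, and $x_0(x_1^2+\cdots+x_n^2)$. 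Finally I would record the geometric payoff: in each nondegenerate case $\sing V(f)$ is the base locus $X$ of $\nabla f$, and $V(f)$ coincides with the secant variety of $X$, because $x^{\#}=\nabla f$ vanishes to the correct order along $X$ (identifying $X$ with the singular locus) while the factorization makes $\mathcal{H}(f)$ drop rank precisely along the secant locus; matching dimensions against the Severi bound then gives the claimed secant--singularity equality.
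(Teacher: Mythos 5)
Your overall architecture matches the paper's: first show that the matrix‑factorization hypothesis makes $f$ EKP‑homaloidal (equivalently, the norm of a semisimple rank‑$3$ Jordan algebra), then read off the classification from the Pirio--Russo/EKP tables. The second stage of your plan is essentially the paper's Corollary and is fine. The problem is the first stage, which you yourself flag as the main obstacle but then dispatch with a gesture ("polarize the matrix‑factorization equation to recover the Freudenthal cross‑product") that does not actually bridge the gap.

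Concretely: the identities $\mathcal H(f)\,\mathbf{x}=2\nabla f$ and $B(x)\,\nabla f(x)=\tfrac12 f(x)\,\mathbf{x}$ are identities in the variable $x$. They do \emph{not} by themselves exhibit a quadratic map $\sigma$ in the target variables $y$ with $\sigma(\nabla f(x))=f(x)\,x$, because $B$ is a matrix of quadrics in $x$, not a function of $y=\nabla f(x)$; so birationality of $\nabla f$, let alone the adjoint identity $(x^{\#})^{\#}=f(x)\,x$, does not follow from this "inversion relation" as stated. (Also, the adjoint identity is strictly stronger than EKP‑homaloidality: it says $\nabla f$ is an involution up to scaling, i.e.\ $g=f$, whereas EKP only asks that the Legendre transform $g$ be a polynomial.) The paper's actual mechanism is different and is the real content of Theorem \ref{thm:HessianMFisEKPHomaloidal}: one sets $\iota=d(\log f)$, computes
\[
(d\iota)^{-1}=\tfrac12\,\mathbf{x}\mathbf{x}^{T}-\mathcal Q(f),\qquad \mathcal Q(f)=f\,\mathcal H(f)^{-1},
\]
so that the matrix‑factorization hypothesis is used precisely to guarantee that $P(u):=-(d\iota)^{-1}_{u}$ is a \emph{polynomial} (quadratic) map $V\to \mathrm{End}(V)$; McCrimmon's theorem on quadratic inversion maps then produces the Jordan product $u\ast v=\tfrac12 P(u,v)(e)$ satisfying the Jordan identity, the hypothesis $\det\mathcal H(\log f)\neq 0$ (non‑cone) gives semisimplicity, and Pirio--Russo's Corollary 4.6 converts this into EKP‑homaloidality. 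None of these inputs (McCrimmon's characterization, the explicit inverse of $d\iota$, the semisimplicity step) appears in your proposal, and without them the passage from "Hessian gives a matrix factorization" to "rank‑$3$ Jordan algebra with norm $f$" is unproved. If you want to keep your route, you would need to actually carry out the promised polarization and verify the Jordan (or adjoint) identity from $\mathcal H(f)\mathcal Q(f)=f\cdot\mathrm{Id}$ alone, which is exactly the nontrivial step the paper outsources to \cite{McC77} and \cite{PR16}.
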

\noindent

In particular, when $f$ is defined in $3$ or more variables, the hypersurface $V(f)$ becomes reduced, and it coincides with the secant locus of $\sing V(f)$. Hence, the ``secant--singularity'' relation always holds for such cubic polynomials. Moreover, $f$ is homaloidal and its gradient map gives a quadro-quadric Cremona transformation. The classification follows from an observation that $f$ appears as the norm of some semisimple Jordan algebra of rank $3$ which forms a part of the surprising coincidence called \emph{XJC-correspondence} of Pirio and Russo \cite{PR16}. 

The structure of the paper is as follows. In Section \ref{Sect:Preliminaries}, we first recall homaloidal and EKP-homaloidal polynomials. Note that the gradient map of a cubic homaloidal polynomial gives a Cremona transformation by quadratic polynomials. In several cases, the inverse of such a Cremona transformation is also represented by quadratic polynomials, which we call a quadro-quadric (or $(2,2)$-) Cremona transformation. There is a strong connection between quadro-quadric Cremona transformations and complex Jordan algebras of rank $3$, which consists a part of a beautiful trichotomy called XJC-correspondence. In Section \ref{Sect:MFHessian}, we classify the cubic forms whose Hessian matrices induce matrix factorizations of themselves using the connection between complex Jordan algebras. The key idea is to construct a semisimple Jordan algebra of rank $3$ from the Hessian matrix whose norm coincides with the given cubic polynomial. We finish by a few examples and further questions for higher degrees.

\section{Preliminaries}\label{Sect:Preliminaries}
We recall some helpful notions and facts. Recall that each Severi variety is associated to a certain prehomogeneous group action, which has a unique invariant cubic $f$ (up to constant multiples). As discussed above, the gradient map $\nabla f : \mathbb{P}^N \dashrightarrow \mathbb{P}^N$ becomes a quadro-quadric Cremona transformation, that is, $\nabla f$ is birational, and both $\nabla f$ and its inverse map are represented by quadratic polynomials. It is interesting to study polynomials whose gradient map is birational.

\begin{defn}
Let $f \in k[x_0 \cdots, x_n]$ be a homogeneous polynomial of degree $d$. $f$ is called \emph{homaloidal} if its partial derivatives $\left( \frac{\partial{f}}{\partial x_0}, \cdots, \frac{\partial{f}}{\partial x_n} \right)$ define a Cremona transformation, \emph{i.e.}, give a birational map $\nabla f : \mathbb{P}^n \dashrightarrow \mathbb{P}^n$. 

Assume furthermore that the Hessian determinant of $(\log f)$ is nonzero. $f$ is called \emph{EKP-homaloidal} if its multiplicative Legendre transformation $g$ is a polynomial \cite{EKP02}. Note that the \emph{multiplicative Legendre transformation $g$ of $f$} is a homogeneous function which satisfies 
\[
g \left( \nabla \log f \right) = g \left( f^{-1} \frac{\partial{f}}{\partial x_0}, \cdots, f^{-1} \frac{\partial{f}}{\partial x_n} \right) = \frac{1}{f(x_0, \cdots, x_n)}.
\]
\end{defn}
Since the Hessian determinant of $\log f$ is nonzero, the multiplicative Legendre transformation is well-defined on an analytic neighborhood at each general point of $\mathbb{P}^n$ thanks to the inverse function theorem. The multiplicative Legendre transformation needs not to be algebraic in general, but it is always a homogeneous function of the degree same as $f$.

Let $g$ be the multiplicative Legendre transform of a homogeneous polynomial $f$ of degree $d$, and let $y_i := \frac{\partial f}{\partial x_i}$ be the $i$-th partial derivative of $f$. Then, we have $g(y_0, \cdots, y_n) = f(x_0, \cdots, x_n)^{d-1}$ by definition, and hence
\begin{eqnarray*}
\frac{\partial g}{\partial y_i} & = & (d-1) f(x_0, \cdots, x_n)^{d-2} \sum_{j} \frac {\partial f}{\partial x_j} \frac{\partial x_j}{\partial y_i} \\
& = & (d-1) f(x_0, \cdots, x_n)^{d-2} \sum_j y_j \frac{\partial x_i}{\partial y_j} \\
& = & f(x_0, \cdots, x_n)^{d-2} x_i
\end{eqnarray*}
thanks to Euler's formula. Thus, $\nabla f$ is a Cremona transformation of type $(d-1, d-1)$ with its inverse $\nabla g$. In particular, EKP-homaloidal implies homaloidal, and EKP-homaloidal cubics always give quadro-quadric Cremona transformations.

In practice, it is not quite easy to determine whether a given polynomial $f$ gives a quadro-quadric Cremona transformation as its gradient map. Instead of computing the inverse Cremona transformation and checking whether it is quadratic, we take a small detour, called \emph{XJC-correspondence} \cite{PR16}. It says that the following objects are in $1-1$ correspondences:
\begin{enumerate}
\item irreducible $3$-RC varieties $X$ covered by twisted cubics; 
\item complex Jordan algebras of rank $3$; 
\item quadro-quadric Cremona transformations. 
\end{enumerate}
In this note, we are particularly interested in the equivalence between complex Jordan algebras of rank $3$ and quadro-quadric Cremona transformations. Indeed, the main theorem of this note follows from the classification of semisimple complex Jordan algebras (of rank $3$). 

Let us briefly recall about Jordan algebras. Note that a complex Jordan algebra is a commutative complex algebra $J=(V, \ast, e)$ with the unity $e$ satisfying the Jordan identity
\[
u^2 \ast (u \ast v) = u \ast (u^2 \ast v)
\]
for every $u, v \in J$. In general, a Jordan algebra is not associative but power associative (\emph{i.e.}, $u^{m+n}=u^m \ast u^n$ for any $m,n$).

Note that a power associative algebra admits a number of notions and properties corresponding to the algebra of square matrices.  Let $\mathbb{C}$-algebra $(V, \ast, e)$ be a power associative algebra, and let $u \in V$ be an element. There is a minimal polynomial $m_u(t)$ which is the monic generator of the kernel of the evaluation map
\begin{eqnarray*}
\phi_u : \mathbb{C}[t] & \to & \mathbb{C}[u]. \\
t & \mapsto & u
\end{eqnarray*}
We define the rank of $(V, \ast, e)$ as $\max \{ \deg m_u(t), u \in V\}$. In particular, there is an analogous statement of Cayley-Hamilton theorem for a complex Jordan algebra $J = (V, \ast, e)$ of rank $3$. Indeed, there is a linear form $T \in J^{*}$ (generic trace), a quadratic form $S \in Sym^2(J^{*})$, and a cubic form $N \in Sym^3(J^{*})$ so that we have a universal minimal polynomial
\[
u^3 - T(u) u^2 + S(u) u - N(u) e = 0
\]
for every $u \in J$. $N$ is called the \emph{norm of $J$}.

We define the \emph{adjoint of $u$} as $u^{\#} := u^2 - T(u)u + S(u)e$. The adjoint and the norm have similar roles as the adjoint and the determinant for usual matrices, in particular, the Laplace formula
\[
u \ast u^{\#} = u^{\#} \ast u = N(u) e
\]
holds for every $u \in J$. We leave \cite{McC78} and \cite[Chapter 6]{Rus16} for more details on power associative and Jordan algebras. 

Let us briefly describe how Jordan algebras and Cremona transformations are related.  If we have a complex Jordan algebra $J= (V, \ast, e)$ of rank $3$, the adjoint map $u \mapsto u^{\#}$ is a quadratic map $V \to V$. Thanks to the Laplace formula, it gives a Cremona involution $\mathbb{P}V \dashrightarrow \mathbb{P}V$, well-defined for elements $u$ such that $N(u) \neq 0$. Note that in the case of Severi varieties, after a suitable linear change of coordinates if necessary, the gradient map of the invariant cubic $f$ becomes a Cremona involution \cite[Theorem 2.8]{ESB89}. In particular, there is a complex Jordan algebra structure with norm $f$, and its gradient map $\nabla f$ plays the role of the adjoint.

We refer \cite[Section 2.2, Theorem 3.4]{PR16} for the detailed description of the converse correspondence. For more interested readers, we also refer Mukai's note \cite{Muk98} which explains a connection between semisimple Jordan algebras of rank $3$ and Legendre varieties.

\section{Cubic form whose Hessian gives its matrix factorization}\label{Sect:MFHessian}
In this note, we study a cubic form which admits a matrix factorization by its Hessian matrix. Such a cubic form is very uncommon even the number of variables is small. Note that the property does not change by a PGL-action, we deal such cubic forms up to linear equivalence. We begin with a few simple examples.

\begin{exmp}\label{exmp:HessianGivesMF}\ 
\begin{enumerate}
\item Let $f = x_0 x_1 x_2 \in k[x_0,x_1,x_2]$. The Hessian $\mathcal H(f)$ of $f$ is
\[
\left(
\begin{array}{ccc}
0 & x_2 & x_1 \\
x_2 & 0 & x_0 \\
x_1 & x_0 & 0 
\end{array}
\right).
\]
It gives a matrix factorization of $f$ since the matrix
\[
\mathcal Q(f) := \frac{1}{2} 
\left(
\begin{array}{ccc}
-{x_0^2} & x_0 x_1 & x_0 x_2 \\
x_0 x_1 & -x_1^2 & x_1 x_2 \\
x_0 x_2 & x_1 x_2 & -x_2^2 
\end{array}
\right)
\]
satisfies $\mathcal H(f) \cdot \mathcal Q(f) = \mathcal Q(f) \cdot \mathcal H(f) = f \cdot Id$. 

\item Let $f = x_0^3+x_1^3+x_2^3-3 \lambda x_0 x_1 x_2$, $\lambda \in \mathbb{C}$ be a Hesse cubic form. One can check that $f$ admits a linear matrix factorization by its Hessian if only if $\lambda^3=1$. In particular, it factors completely into a product of $3$ distinct linear forms, and hence it is linearly equivalent to the first example.

\item Let $Z=v_2 (\mathbb{P}^2)$ be the Veronese surface in $\mathbb{P}^5$, and let $X=Sec(Z)$ be its secant variety. It is well-known that the ideal of $Z$ and $X$ are generated by the 2-minors and the determinant of the symmetric matrix
\[
A = 
\left(
\begin{array}{ccc}
x_0 & x_1 & x_2 \\
x_1 & x_3 & x_4 \\
x_2 & x_4 & x_5
\end{array}
\right)
\]
respectively, in particular, $X$ is a cubic hypersurface defined by $f = \det A$. One can check that the Hessian matrix $\mathcal H(f)$ of $f$ gives a matrix factorization of itself.
\end{enumerate}
\end{exmp}

Inspired by the case of Severi varieties as in \cite{ESB89}, it is natural to consider the relations between Cremona transformations. Let $X = V(f)$ be the cubic hypersurface which is the secant variety of one of $4$ Severi varieties. As discussed above, the partial derivatives of $f$ induce a Cremona transformation $\tau = \left( \frac{\partial{f}}{\partial x_0}, \cdots, \frac{\partial{f}}{\partial x_n} \right)$, and furthermore, $\tau^2=id$, \emph{i.e.}, $\tau$ is an involution (after a linear change of coordinates if necessary) \cite[Theorem 2.8]{ESB89}. In particular, the base locus $Z (=  \sing X)$ of $\tau$  recovers the Severi variety, and the gradient map $\tau$ gives a quadro-quadric Cremona transformation. In fact, $f$ is EKP-homaloidal, which can be found as Example $3 - 6$ from the list \cite[Examples in p.38,  Theorem 3.10]{EKP02}. In the case, one can check that the Hessian matrix $\mathcal H(f)$ gives a matrix factorization of $f$ itself (see the arguments in \cite[Remark 3.5]{KS19}). Thus, our question turns out to be:

\begin{ques}\label{ques:HessianMFandEKPHomaloidal}
Let $f$ be a cubic polynomial whose Hessian gives a matrix factorization of itself. Is $f$ EKP-homaloidal?
\end{ques}

In the above examples, every $V(f)$ coincides with the secant variety of its singular locus. Hence, we may also ask the following geometric question:

\begin{ques}\label{ques:HessianAndSecant}
Let $X=V(f)$ be a cubic hypersurface and $Z=\sing(X)$ be its singular locus. Suppose that the Hessian matrix $\mathcal H(f)$ forms a matrix factorization of $f$. Does the secant variety $Sec(Z)$ coincide with $X$?
\end{ques}

The converse of the question is negative. Let $Z$ be the rational normal curve in $\mathbb{P}^4$, and let $X$ be its secant variety. Note that $X$ is a cubic hypersurface and $\sing(X)=Z$. In this case, one can check that the Hessian matrix of $X$ does not give a matrix factorization since the cokernel module of the matrix is not supported on $X$.

Without any information about its matrix factorization, we have the following elementary proposition which connects the Hessian and the secant variety of the singular locus.

\begin{prop}
Let $X = V(f) \subset \mathbb{P}^{n}$ be a cubic hypersurface, and let $Z=\sing(X) \neq \emptyset$ be its singular locus. Then the secant variety $Sec(Z)$ is contained in $X$, and the Hessian matrix of $X$ is not of full rank along $Sec(Z)$ .
\end{prop}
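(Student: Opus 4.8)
The plan is to prove two separate assertions: that $\sec(Z) \subseteq X$, and that the Hessian matrix degenerates along $\sec(Z)$. Both should follow from the fact that $Z = \sing(X)$ consists of points of multiplicity $\geq 2$ on the cubic hypersurface $X$.

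First I would establish the containment $\sec(Z) \subseteq X$. The key observation is that a line joining two singular points of a cubic hypersurface must lie inside the hypersurface. Indeed, let $p, q \in Z$ be two singular points and consider the line $\ell = \overline{pq}$. Restricting $f$ to $\ell$ gives a homogeneous cubic polynomial in one parameter (a binary form of degree $3$, after parametrizing $\ell$). Since $p$ is a singular point of $X$, the polynomial $f|_\ell$ vanishes to order at least $2$ at $p$; similarly it vanishes to order at least $2$ at $q$. A nonzero binary cubic has exactly $3$ roots counted with multiplicity, so vanishing to order $\geq 2$ at two distinct points would force degree $\geq 4$, a contradiction. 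Hence $f|_\ell \equiv 0$, i.e. $\ell \subseteq X$. Since $\sec(Z)$ is the Zariski closure of the union of such secant lines, and $X$ is closed, we conclude $\sec(Z) \subseteq X$.

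Next I would show the Hessian matrix $\mathcal H(f)$ fails to be of full rank along $\sec(Z)$. The cleanest route is to first prove this on $Z$ itself and then propagate it to secant lines. At a singular point $p \in Z$, all first partials $\partial f/\partial x_i$ vanish. Since each entry of $\mathcal H(f)$ is (up to a constant) a first partial of $\partial f/\partial x_j$, and $f$ is a cubic so these are linear forms, I would examine the rank of $\mathcal H(f)(p)$. The crucial point is that the second-order behavior of $f$ at a singular point is constrained: the entries $\partial^2 f / \partial x_i \partial x_j$ evaluated at $p$ form the Hessian of the quadratic part of $f$ in local coordinates centered at $p$, and since $p$ has multiplicity $\geq 2$, there is a distinguished direction (the radial/Euler direction) producing a kernel vector. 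Concretely, by Euler's relations applied twice, the vector $p = (p_0, \dots, p_n)$ itself satisfies $\mathcal H(f)(p) \cdot p = 2 \nabla f(p) = 0$ since $p \in Z = \sing(X)$. Thus $\mathcal H(f)(p)$ is singular at every point of $Z$.

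The main obstacle will be passing from rank-degeneracy on $Z$ to rank-degeneracy on all of $\sec(Z)$, since being non-full-rank is not a priori preserved along secant lines. Here I would exploit the bilinearity structure: the $2 \times 2$ (or appropriate) minors that must vanish are themselves polynomials, and the argument of the first part shows secant lines lie in $X$, hence in the singular-along locus in a suitable sense. I expect the honest way to close this is to observe that $\det \mathcal H(f)$, being a polynomial that vanishes on $Z$ together with enough of its derivatives (because $Z$ is the singular locus), must vanish on the lines joining points of $Z$ by the same binary-form argument applied to the hypersurface $V(\det \mathcal H(f))$; alternatively, one shows the kernel vector varies and produces a null direction at each point of a secant line. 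The subtlety is ensuring one is not merely proving $\det \mathcal H(f) = 0$ at isolated points but along the full secant variety, which will require checking the vanishing order of $\det \mathcal H(f)$ on $Z$ is high enough to force it to vanish on secants — this comparison of multiplicities is where the real work lies.
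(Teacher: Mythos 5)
Your first half is fine and matches the paper: restricting the cubic to a line through two singular points gives a binary cubic vanishing to order $\geq 2$ at two distinct points, so it vanishes identically; the paper phrases this as $\ell\cap X$ having length at least $4$. Your Euler-relation observation $\mathcal H(f)(p)\cdot p = 2\nabla f(p) = 0$ correctly shows $\mathcal H(f)$ is singular at points of $Z$ (and disposes of the case where $Z$ is a single point, where $Sec(Z)=Z$). But the second half has a genuine gap, which you yourself flag without closing: nothing you write actually forces the Hessian to degenerate at a \emph{general} point of a secant line, i.e.\ on $Sec(Z)\setminus Z$. The fallback you sketch --- applying the binary-form argument to $V(\det\mathcal H(f))$ --- does not work as stated: $\det\mathcal H(f)$ is a form of degree $n+1$, so its restriction to a secant line is a binary form of degree $n+1$, and vanishing (even to order $2$) at the two endpoints kills at most $4$ of its $n+1$ roots; for $n\geq 4$ this forces nothing. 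One would need the vanishing order of $\det\mathcal H(f)$ along $Z$ to be comparable to $n$, which you neither claim nor prove.

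The paper closes this by a direct coordinate computation that you should adopt. Put $P=[1:0:\cdots:0]$ and $Q=[0:1:0:\cdots:0]$. Singularity of $f$ at both points kills every monomial divisible by $x_0^2$ or $x_1^2$, so
\[
f=\sum_{i=2}^{n}c_i\, x_0 x_1 x_i + x_0\, g_2(x_2,\dots,x_n) + x_1\, g_2'(x_2,\dots,x_n) + h_3(x_2,\dots,x_n).
\]
At a point $[a:b:0:\cdots:0]$ of the line $\overline{PQ}$, every partial of $g_2$, $g_2'$, $h_3$ and the cross term $\sum c_i x_i$ vanishes, and the first two rows of $\mathcal H(f)$ become $(0,0,c_2b,\dots,c_nb)$ and $(0,0,c_2a,\dots,c_na)$, which are proportional. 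Hence the Hessian drops rank at \emph{every} point of every secant line, and since the non-full-rank locus is closed, along all of $Sec(Z)$. This is a pointwise linear-dependence of rows, not a multiplicity count on $\det\mathcal H(f)$, and it is the step your proposal is missing.
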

\begin{proof}
Suppose first that $Z$ is not a single point. Let $P, Q \in Z$ be two distinct points, and let $\ell = \left< P, Q \right>$ be the line passing through $P$ and $Q$. Note that $\ell$ is contained in $X$ since the length of $\ell \cap X$ is at least 4. In particular, $Sec(Z) \subseteq X$.

After a certain coordinate change, we may assume that $P=[1:0:\cdots:0]$ and $Q=[0:1:0:\cdots : 0]$. Since $P$ and $Q$ are singular points of $X$, we may write $f$ as follows:
\[
f=\sum_{i=2}^{n}c_i x_0 x_1 x_i + x_0 g_2 (x_2, \cdots, x_{n}) + x_1 g^{\prime}_2 (x_2, \cdots, x_{n}) + h_3(x_2, \cdots, x_{n})
\]
where $g_2, g^{\prime}_2, h_3$ are polynomials in $x_2, \cdots, x_{n}$ of degree $2, 2, 3$, respectively. At a point $[a:b:0:\cdots:0]$ on $\ell$, the first two rows of Hessian of $F$ are
\[
\left(
\begin{array}{cccccc}
0 & 0 & c_2 b & c_3 b & \cdots & c_{n}b \\
0 & 0 & c_2 a & c_3 a & \cdots & c_{n}a \\
\vdots & \vdots & \vdots & \vdots & \cdots & \vdots
\end{array}
\right)
\]
hence the first two rows are linearly dependent. 

When $Z$ is a single point, then the secant variety of $Z$ is $Z$ itself. We may assume that $P = [1:0:\cdots : 0]$ and $Z = \{P\}$, hence,
\[
f = x_0 g_2(x_1, \cdots, x_n ) + h_3 (x_1, \cdots, x_n)
\]
for some polynomials $g_2, h_3$ in $x_1, \cdots, x_n$ of degree $2, 3$, respectively. It is clear that the first row of the Hessian at $P$ is 0.
\end{proof}

\begin{cor}
Let $X=V(f)$ be a reduced cubic hypersurface and $Z=\sing(X)$ be its singular locus. Suppose that $Sec(Z)=X$. Then the determinant of the Hessian is divisible by $f$.
\end{cor}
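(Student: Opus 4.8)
The plan is to deduce the divisibility purely formally from the preceding Proposition together with the reducedness hypothesis, via the Nullstellensatz; no new geometry is needed. First I would invoke the Proposition. Note that the hypothesis $Sec(Z) = X$ forces $Z = \sing(X) \neq \emptyset$, so the Proposition applies, and it asserts that the Hessian matrix $\mathcal H(f)$ fails to be of full rank at every point of $Sec(Z)$. Since $\mathcal H(f)$ is a square matrix of size $n+1$, failing to be of full rank is the same as having vanishing determinant, so the homogeneous polynomial $\det \mathcal H(f)$ vanishes identically along $Sec(Z)$.

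Next I would feed in the hypothesis $Sec(Z) = X = V(f)$, which turns the previous observation into the statement that $\det \mathcal H(f)$ vanishes on all of $V(f)$; equivalently, $\det \mathcal H(f) \in I(V(f))$.

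Finally I would invoke reducedness. Because $X = V(f)$ is a \emph{reduced} cubic hypersurface, $f$ is a squarefree homogeneous polynomial, so the principal ideal $(f)$ is radical and Hilbert's Nullstellensatz gives $I(V(f)) = \sqrt{(f)} = (f)$. Combining with the previous step yields $\det \mathcal H(f) \in (f)$, that is, $f \mid \det \mathcal H(f)$, as claimed.

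There is no serious obstacle here; the content is carried entirely by the Proposition, and the only point worth flagging is that the reducedness assumption is precisely what upgrades the set-theoretic vanishing of $\det \mathcal H(f)$ on $V(f)$ to honest divisibility by $f$ — without it one could only conclude that the radical $f_{\mathrm{red}}$ divides $\det \mathcal H(f)$, or that some power of $f$ does. It is also worth noting for consistency that $\det \mathcal H(f)$ is homogeneous of degree $n+1$ while $f$ has degree $3$, so the divisibility is nonvacuous exactly when $n \geq 2$, matching the regime of interest in the classification.
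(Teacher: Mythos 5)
Your argument is correct and is precisely the intended one: the paper states this corollary without proof as an immediate consequence of the preceding Proposition, and your Nullstellensatz step (reducedness of $V(f)$ making $(f)$ radical, so set-theoretic vanishing of $\det\mathcal H(f)$ on $X$ upgrades to $f\mid\det\mathcal H(f)$) is exactly the detail being left to the reader. Your side remarks on why $Z\neq\emptyset$ and on the degrees are accurate and harmless.
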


Note that if an $(m \times m)$ matrix $A$ gives a matrix factorization of $f$, then $\det A$ divides $f^m$. Hence, when $f$ is irreducible, the only possible values for $\det A$ are powers of $f$. Notice that the determinant of a matrix factorization $A$ of $f$ cannot be zero, whereas the determinant of the Hessian matrix can vanish sometimes. We refer \cite{CRS08, GR15} for discussion on polynomials with vanishing Hessian determinants. 

Let us go back to our main question. Under a mild nondegenerate condition on $\log f$, we answer to Question \ref{ques:HessianMFandEKPHomaloidal}, by a short detour to cubic Jordan algebras.

\begin{thm}\label{thm:HessianMFisEKPHomaloidal}
Let $f$ be a homogeneous cubic form such that the Hessian determinant $\det \mathcal H (\log f)$ is not identically $0$. Then the Hessian of $f$ induces a matrix factorization of $f$ if and only if $f$ is EKP-homaloidal.
\end{thm}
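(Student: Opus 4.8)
The plan is to treat the two implications separately after a common reduction. Writing $H := \mathcal H(f)$ for the (symmetric, linear) Hessian, Euler's identity for the cubic $f$ gives $Hx = 2\nabla f$ and $x^{T}\nabla f = 3f$. Differentiating $\log f$ twice yields $\mathcal H(\log f) = f^{-2}\big(fH - \nabla f\,\nabla f^{T}\big)$, and the matrix determinant lemma together with $H^{-1}\nabla f = \tfrac12 x$ gives
\[
\det\mathcal H(\log f) = -\tfrac12\,f^{-(n+1)}\det H .
\]
Hence the standing hypothesis $\det\mathcal H(\log f)\not\equiv 0$ is equivalent to $\det H\not\equiv 0$, i.e. to $\nabla f$ being dominant; throughout we may therefore treat $H$ as invertible over the function field.

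For the implication EKP-homaloidal $\Rightarrow$ Hessian matrix factorization I would argue by explicit construction. If $f$ is EKP-homaloidal then its multiplicative Legendre transform $g$ is a homogeneous cubic \emph{polynomial}, and the computation recalled in Section~\ref{Sect:Preliminaries} gives $\nabla g\circ\nabla f = f\cdot\mathrm{id}$. Differentiating this by the chain rule produces the matrix identity $\mathcal H(g)(\nabla f)\cdot H = x\,\nabla f^{T} + f\cdot\mathrm{Id}$. I would then set
\[
B := \mathcal H(g)(\nabla f) - \tfrac12\,x x^{T},
\]
which has quadratic entries precisely because $g$ is a cubic polynomial, and verify $BH = HB = f\cdot\mathrm{Id}$ using $x^{T}H = 2\,\nabla f^{T}$ and the symmetry of both factors. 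This exhibits $H$ as a matrix factorization of $f$.

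For the converse I would aim to realize $f$ as the norm of a complex Jordan algebra of rank $3$, after which the XJC-correspondence immediately yields that $\nabla f$ is a quadro-quadric Cremona transformation, whence $f$ is EKP-homaloidal. Let $B$ be the matrix factorization partner of $H$; by the uniqueness recalled in the introduction it is the unique such matrix, it is symmetric, has quadratic entries, and equals $fH^{-1}$. Left-multiplying $HB = f\cdot\mathrm{Id}$ by $x^{T}$ and using $x^{T}H = 2\,\nabla f^{T}$ gives the basic relation $B\,\nabla f = \tfrac12\,f\,x$. The strategy is to feed $(f,H,B)$, together with a base point $e$ normalised by $f(e)=1$, into Springer's construction of a Jordan algebra from a cubic form \cite{McC78}: the pair $(H,B)$ supplies the generic trace bilinear form, and $\nabla f$ (normalised by that trace form) supplies the adjoint map $u\mapsto u^{\#}$. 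The whole construction then hinges on a single algebraic identity, the \emph{adjoint identity} $u^{\#\#}=f(u)\,u$; granting it, Springer's theorem produces a rank-$3$ Jordan algebra with norm $f$ whose adjoint is realized by $\nabla f$, and the XJC-correspondence identifies $\nabla f$ with a quadro-quadric Cremona transformation, so that $f$ is EKP-homaloidal.

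I expect the adjoint identity to be the main obstacle, and it is worth isolating what it amounts to concretely. In coordinates adapted to the trace form — in which the adjoint is literally $\nabla f$ — the adjoint identity becomes the statement that $\nabla f$ is a Cremona involution up to scalar, $\nabla f\circ\nabla f = \lambda\,f\cdot\mathrm{id}$; differentiating shows this is in turn equivalent to the clean matrix identity $\mathcal H(f)(\nabla f) = \lambda\big(B + \tfrac12 x x^{T}\big)$, which one checks by hand in the Veronese case of Example~\ref{exmp:HessianGivesMF} with $\lambda=1$. The delicate point is that the involution form is \emph{not} preserved under arbitrary linear changes of coordinates, whereas the matrix-factorization hypothesis is; the role of the partner $B$ is precisely to encode the trace form that selects the correct (self-adjoint) coordinates. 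I would therefore try to establish the adjoint identity invariantly, by differentiating the two relations $HB = f\cdot\mathrm{Id}$ and $B\,\nabla f = \tfrac12 f x$, contracting against the Euler vector field, and using the uniqueness of $B$ to force $B + \tfrac12 x x^{T}$ to agree with $\mathcal H(f)(\nabla f)$ up to the scalar $\lambda$; alternatively, the ``secant--singularity'' description recorded in the preceding Proposition can be used to show that the base locus $\sing V(f)$ governs $\nabla f$ as a genuine quadro-quadric involution. Completing this step is where the real work lies.
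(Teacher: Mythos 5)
Your forward direction (EKP-homaloidal $\Rightarrow$ the Hessian is a matrix factorization) is correct and is essentially the paper's own argument: the paper likewise differentiates $g(\nabla f)=f^{2}$ to obtain $\partial^{2}g/\partial y_i\partial y_j=f\,\partial x_j/\partial y_i+\tfrac12 x_ix_j$ and reads off the quadratic partner $\mathcal Q(f)=f\,\mathcal H(f)^{-1}$; your identity $B=\mathcal H(g)(\nabla f)-\tfrac12 xx^{T}$ is the same computation packaged slightly differently. Your preliminary reduction $\det\mathcal H(\log f)=-\tfrac12 f^{-(n+1)}\det\mathcal H(f)$ is also correct and consistent with how the paper uses the nondegeneracy hypothesis.

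The converse is where you have a genuine gap. You and the paper share the overall strategy --- realize $f$ as the norm of a rank-$3$ complex Jordan algebra and invoke the XJC-correspondence --- but your implementation goes through Springer's cubic-form construction, which requires the adjoint identity $u^{\#\#}=f(u)\,u$, i.e.\ that $\nabla f$ is (up to scalar and a choice of coordinates) a Cremona involution. You explicitly leave this unproved (``where the real work lies''), and it is not a routine verification: it is essentially equivalent to the assertion that $\nabla f$ is a quadro-quadric Cremona transformation, which is the content of the theorem. The paper sidesteps this entirely by following the proof of \cite[Theorem 3.4]{PR16}: setting $\iota=d(\log f)$, it defines $P(u)=-(d\iota)^{-1}_{u}$, observes that the matrix factorization hypothesis says precisely that $-(d\iota)^{-1}=\tfrac12 xx^{T}-\mathcal Q(f)$ has polynomial (quadratic) entries, and then applies McCrimmon's inversion-axioms theorem \cite[Theorem 4.4]{McC77} to get the Jordan product $u\ast v=\tfrac12 P(u,v)(e)$ directly, with no need to check the adjoint identity by hand; semisimplicity (needed for \cite[Corollary 4.6]{PR16}) follows because $\det\mathcal H(f)\neq 0$ forces $V(f)$ not to be a cone. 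To salvage your route you would have to derive $\nabla f\circ\nabla f=\lambda f\cdot\mathrm{id}$ from $HB=f\cdot\mathrm{Id}$ and $B\nabla f=\tfrac12 fx$ alone, and none of the manipulations you sketch visibly closes that loop; I would recommend switching to the McCrimmon/Pirio--Russo inversion argument.
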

\begin{proof}
$(\Mapsto)$ We follow the arguments in the proof of \cite[Theorem 3.4]{PR16}. Let $y_i = \frac {\partial f}{\partial x_i}$ be the $i$-th partial derivative of $f$. Let $V$ be the $(n+1)$-dimensional vector space so that $\mathbb{P}V$ is the ambient projective space with coordinates $x_0, \cdots, x_n$. Note that the map $\iota := d (\log f) = (f^{-1} y_0 , \cdots, f^{-1} y_n)$ is a rational map which is homogeneous of degree $-1$. To construct a cubic Jordan algebra with the norm $f$, the key is to build a quadratic affine morphism $P : V \to End (V)$ given by $P (u_0, \cdots, u_n) = - (d \iota)^{-1}_{(u_0, \cdots, u_n)}$. If then, it is straightforward that
\[
P(u,v)=P(u+v)-P(u)-P(v)
\]
is bilinear in $u, v \in V$, and hence a result of McCrimmon \cite[Theorem 4.4]{McC77} implies that
\[
u \ast v := \frac{1}{2}P(u,v)(e)
\]
satisfies the Jordan identity, where $e \in V$ is the identity element. Note that the existence of the unity $e$ is ensured by arguments in the first paragraph in \cite[Proof of Theorem 3.4]{PR16}. Indeed, $(V,\ast, e)$ is a cubic Jordan algebra equipped with the norm $f$, and the adjoint map is given by the gradient map $u \mapsto (y_0 (u), \cdots, y_n (u))$. 

By the assumption, the Hessian $\mathcal H(f)$ gives a matrix factorization $(\mathcal H(f), \mathcal Q(f))$ of $f$, where the entries of $\mathcal Q(f) = f  \cdot \mathcal H(f)^{-1}$ are quadratic polynomials in $x_0, \cdots, x_n$. It is enough to show that the quadratic rational map $- (d \iota)^{-1}$ is indeed an affine morphism, \emph{i.e.,}, the entries of $-(d \iota)^{-1}$ are given by polynomials. Since 
\begin{eqnarray*}
(d \iota)_{(i,j)} = \frac{\partial (\log f)}{\partial x_i \partial x_j} & = & \frac{\partial (f^{-1} y_j)}{\partial x_i} \\
& = & \frac{1}{f^2} \left( f \frac{\partial y_j}{\partial x_i} - y_i y_j \right)
\end{eqnarray*}
equals to the $(i,j)$-th entry of the matrix $[f^{-1} \mathcal H(f)  - f^{-2} (y_0, \cdots, y_n)^T (y_0, \cdots, y_n)]$, we immediately check that
$[\mathcal Q(f) - \frac{1}{2} (x_0, \cdots, x_n)^T (x_0, \cdots, x_n)]$ is the inverse of $(d \iota)$ thanks to the Euler formula
\[
\begin{array}{rcl}
\mathcal H(f) (x_0, \cdots, x_n)^T & = & 2 \cdot (y_0, \cdots, y_n)^T \\
(y_0, \cdots, y_n) (x_0, \cdots, x_n)^T & = & 3 \cdot f \cdot (x_0, \cdots, x_n). \\
\end{array}
\]
In particular, 
\[
P=-(d \iota)^{-1} = \left[ \frac{1}{2}(x_0, \cdots, x_n)^T (x_0, \cdots, x_n) - \mathcal Q(f) \right]
\]
 is given by a square matrix whose entries are homogeneous quadratic polynomials in $x_0, \cdots, x_n$. 

Note that the hypersurface $V(f)\subset \mathbb{P}^n$ cannot be a cone since its Hessian $\mathcal H(f)$ has nonvanishing determinant. Hence, the radical $V(d^2 f)  \subset \mathbb{P}^n$, which is the vertex of the cone $V(f)$ \cite[Proposition 4.4]{PR16}, must be empty. This implies that the cubic Jordan algebra $(V,\ast, e)$ we constructed is semisimple, and thus the conclusion follows from \cite[Corollary 4.6]{PR16} as desired.

$(\Mapsfrom)$ Let $y_i = \frac {\partial f}{\partial x_i}$ be the $i$-th partial derivative of $f$, which is a homogeneous quadratic polynomial in $x_i$'s. Recall that the multiplicative Legendre transformation of $f$ is a homogeneous function $g$ such that
\[
	g\left(  \frac{y_0}{f(x_0, \cdots, x_n)}, \cdots, \frac{y_n}{f(x_0, \cdots, x_n)} \right) = \frac{1}{f(x_0, \cdots, x_n)}.
\]
Since the degree of $g$ equals to the degree of $f$, we have the following identity
\[
g(y_0, \cdots, y_n) = f(x_0, \cdots, x_n)^2.
\]
Thanks to the Euler formula, it is easy to see that the 2nd derivatives of $g$ are given by:
\[
\frac{\partial^2 g}{\partial y_i \partial y_j} = \frac{\partial (x_j f(x_0, \cdots, x_n))}{\partial y_i} = f \cdot \left( \frac{\partial x_j}{\partial y_i} \right) + \frac{1}{2}x_i x_j.
\]
Since $f$ is EKP-homaloidal, the multiplicative Legendre transform $g$ becomes a cubic polynomial, and hence the above 2nd derivative of $g$ is a linear polynomial in $y_i$'s (= quadratic polynomial in $x_i$'s). Thus, we conclude that each entry of the matrix  
\[
\mathcal Q(f)_{{(i,j)}} := f \cdot \left( \frac{\partial x_j}{\partial y_i} \right) = \frac{\partial^2 g}{\partial y_i \partial y_j} - \frac{1}{2}x_i x_j
\]
 is a homogeneous quadratic polynomial in $x_i$'s. Since the matrix $\left( \frac{\partial x_j}{\partial y_i} \right)_{(i,j)}$ is the inverse of the Hessian $\mathcal H(f) = \left( \frac{\partial y_i}{\partial x_j} \right)_{(i,j)}$, we have $\mathcal Q(f) \mathcal H(f) = \mathcal H(f) \mathcal Q(f)= f \cdot Id$, \emph{i.e.}, the Hessian matrix $\mathcal H(f)$ gives a matrix factorization of $f$.
\end{proof}

\begin{rem}
There are a few degenerate quadro-quadric Cremona transformations when the number of variables is small. For example, let $f=x_0^2 x_1$ be a cubic form in $2$ variables. It is clear that the Hessian of $f$ induces a matrix factorization of $f$, however, the gradient map $(2x_0 x_1, x_0^2)$ is composed of quadratic polynomials which have a nontrivial common divisor. Hence, in a strict sense, it is not a $(2,2)$-Cremona transformation, but an $(1,1)$-Cremona transformation after dividing the common factor $x_0$ (which is called a fake quadro-quadric Cremona transformation, see \cite[Example 2.2-(2)]{PR16}).
\end{rem}

The above correspondence immediately gives the complete classification of cubic forms whose Hessian matrices induce matrix factorizations of themselves (see the list \cite[Theorem 3.10]{EKP02}, \cite [Table 1]{PR16}). In particular, we also give an affirmative answer to Question \ref{ques:HessianAndSecant} by plugging in each case.

\begin{cor}\label{Cor:ClassificationCubicWithHessianMF}
Let $f$ be a homogeneous cubic form such that $\det \mathcal H (\log f) \neq 0$. Suppose that the Hessian of $f$ induces a matrix factorization of $f$. Then $f$ is linearly equivalent to one of the followings:
\[
\left\{
\begin{array}{cl}
f \ = & x_0^3 \text{ in a single variable;}\\
f \ = & x_0^2x_1 \text{ in two variables;}\\
f \ = & x_0  (x_1^2 + \cdots + x_n^2) \text{ in $(n+1)$ variables;}\\
f \ = & \text{equation of the secant variety of the one of $4$ Severi varieties.} \end{array}
\right.
\]
In particular, when $f$ is defined in $3$ or more variables, then $X=V(f)$ coincides with the secant variety of its singular locus $Z=\sing(X)$. 
\end{cor}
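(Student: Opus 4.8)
The plan is to reduce the classification to the known list of EKP-homaloidal cubics through Theorem~\ref{thm:HessianMFisEKPHomaloidal}, and then to read off each norm form together with the geometry of its singular locus. First I would apply that theorem: a cubic $f$ with $\det\mathcal{H}(\log f)\neq 0$ whose Hessian gives a matrix factorization is EKP-homaloidal, so by the computation in Section~\ref{Sect:Preliminaries} the gradient map $\nabla f$ is a quadro-quadric Cremona transformation, and the XJC-correspondence \cite{PR16} attaches to $f$ a complex Jordan algebra $J=(V,\ast,e)$ of rank $3$ whose norm $N$ equals $f$. Exactly as in the proof of Theorem~\ref{thm:HessianMFisEKPHomaloidal}, the hypothesis $\det\mathcal{H}(\log f)\neq 0$ forces the radical $V(d^2f)$ to be empty, so that $J$ is semisimple.

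Next I would invoke the classification of semisimple complex Jordan algebras of rank $3$. Such an algebra is a direct sum of simple ideals and the rank is additive over this decomposition, so with total rank $3$ the only possibilities are a single simple cubic factor, a sum $\mathbb{C}\oplus J(Q)$ of a rank-one factor with a rank-two spin factor attached to a nondegenerate quadratic form $Q$, or the split algebra $\mathbb{C}\oplus\mathbb{C}\oplus\mathbb{C}$. The simple cubic factors are precisely the four algebras $H_3(\mathbb{A})$ whose norms are the defining cubics of the secant varieties of the four Severi varieties (the determinant of a symmetric $3\times 3$ matrix, the determinant of a general $3\times 3$ matrix, the Pfaffian of a $6\times 6$ alternating matrix, and the Cartan cubic); the factor $\mathbb{C}\oplus J(Q)$ gives $x_0(x_1^2+\cdots+x_n^2)$; and $\mathbb{C}\oplus\mathbb{C}\oplus\mathbb{C}$ gives $x_0x_1x_2$, which is linearly equivalent to the previous form in three variables. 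This accounts for the last two families. The remaining forms $x_0^3$ and $x_0^2x_1$ occur as the degenerate one- and two-variable entries of the list \cite[Theorem~3.10]{EKP02}, \cite[Table~1]{PR16}, where the associated Cremona transformation is only \emph{fake} quadro-quadric (cf.\ the Remark following Theorem~\ref{thm:HessianMFisEKPHomaloidal}); these I would dispose of by direct inspection, since the list of EKP-homaloidal cubics in one or two variables is very short.

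For the secant--singularity statement when $f$ is defined in three or more variables, one inclusion $Sec(\sing X)\subseteq X$ is already supplied by the elementary Proposition proved above. For the reverse inclusion I would use the rank stratification of $J$: the hypersurface $X=V(N)$ is the locus of elements of rank $\le 2$, its singular locus $Z$ is the zero locus of the adjoint $u\mapsto u^{\#}$, equivalently of $\nabla f$, hence the locus of elements of rank $\le 1$, and a general secant line joining two rank-one elements parametrizes elements of rank $\le 2$; this yields $X\subseteq Sec(Z)$ and hence equality. For the Severi family this is the classical fact that each such cubic is the secant variety of its Severi variety \cite{ESB89}, while for $x_0(x_1^2+\cdots+x_n^2)$ I would check directly that $Z$ is the union of the point $[1:0:\cdots:0]$ with the quadric $\{x_0=0,\ x_1^2+\cdots+x_n^2=0\}$, whose secant variety is exactly $X$.

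The step I expect to be the main obstacle is organizational rather than computational: one must correctly match the four abstract simple rank-$3$ norms with the explicit Severi cubics and confirm that the spin-factor norm becomes $x_0(x_1^2+\cdots+x_n^2)$ after a suitable linear change of coordinates, and one must handle the genuinely degenerate one- and two-variable forms by hand, since there the clean Jordan-algebra dictionary collapses to fake quadro-quadric Cremona transformations outside the scope of the XJC-correspondence.
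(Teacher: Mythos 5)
Your proposal is correct and follows essentially the same route as the paper: reduce via Theorem~\ref{thm:HessianMFisEKPHomaloidal} to the EKP-homaloidal case, classify via the semisimple rank-$3$ Jordan algebra attached to $f$ (the paper simply cites \cite[Corollary 4.6, Table 1]{PR16} where you unpack the decomposition into simple ideals), and treat the one- and two-variable cases by hand. Your rank-stratification argument for $X=Sec(\sing X)$ and the explicit check for $x_0(x_1^2+\cdots+x_n^2)$ supply details the paper compresses into ``plugging in each case,'' and they are accurate.
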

\begin{proof}
By Theorem \ref{thm:HessianMFisEKPHomaloidal}, such $f$ is EKP-homaloidal.  When $f$ is written in $3$ or more variables, the result immediately follows from \cite[Corollary 4.6]{PR16} and the corresponding classification \cite[Table 1]{PR16}. In the case of a single variable, there is only one homogeneous cubic form $f=x_0^3$ up to linear equivalence. Clearly, its 2nd derivative $6x_0$ gives a matrix factorization (by $1 \times 1$ matrices). Finally, in the case of two variables, $f$ completely decomposes into a multiple of linear forms again. One can also check that the only possible case is that $f$ is a multiple of the square of a linear form and another linear form, namely, $f = x_0^2 x_1$. 
\end{proof}

We finish this note with a short remark on higher degrees. In fact, \cite[Theorem 2.8]{ESB89} implies much more than cubics, when we play with a regular prehomogeneous variety and the corresponding invariant hypersurface $V(f)$. Suppose we have an irreducible regular prehomogeneous representation $V \simeq \mathbb{C}^{n+1}$ of a semisimple group $G$, as in \cite[Theorem 2.8]{ESB89}. Take $f$ be a $G$-invariant polynomial, and let $d$ be its degree. Following the arguments of Ein and Shepherd-Barron, the (signed) partial derivatives defines a Cremona involution. Since we assumed that the Hessian determinant $\det \mathcal H(f)$ is nonzero, it should divide some power of $f$. In particular, when $f$ is irreducible, then the Hessian $\mathcal H(f)$ gives a matrix factorization of $f^r$ for some $r>0$. When $d = 3$, the Hessian $\mathcal H(f)$ gives a matrix factorization of $f$ itself as we discussed. On the other hand, when $d>3$, the Hessian $\mathcal H(f)$ needs not to give a matrix factorization of $f$; we have to carefully observe the exponent $r$. We address a few higher degree examples whose Hessian gives a matrix factorization for some power of $f$.

\begin{exmp}\ 
\begin{enumerate}
\item Let $S=k[x_{00}, \cdots, x_{22}, z]$ and $f$ be the multiple of $z$ and the determinant of the $3 \times 3$ matrix consisted of $x_{00} \cdots, x_{22}$, so that $f$ is invariant under a $SL(3) \times \mathbb{C}^{\times}$-action. One can check that $f$ is a quartic homogeneous polynomial whose Hessian gives a matrix factorization of $f$ itself.

\item Let $S=k[x_{000}, \cdots, x_{111}]$ and $f$ be the hyperdeterminant of the $2 \times 2 \times 2$ hypermatrix with entries given by the coordinates. Then $f$ is a quartic homogeneous polynomial which is invariant under the $SL(2) \times SL(2) \times SL(2)$-action. Following the argument of Ein and Shepherd-Barron, the Hessian $\mathcal H(f)$ gives a matrix factorization of $f^2$. One can check that the Hessian does not induce a matrix factorization of $f$ since its cokernel is annihilated by $f^2$, but not by $f$. 

\item Let $S=k[x_{000}, \cdots, x_{211}]$ and $f$ be the hyperdeterminant of the $3 \times 2 \times 2$ hypermatrix with entries given by the coordinates. As similar as above, $f$ is a sextic homogeneous polynomial which is invariant under $SL(3) \times SL(2) \times SL(2)$-action. Following the argument of Ein and Shepherd-Barron, one can check that the Hessian $\mathcal H(f)$ gives a matrix factorization of $f^4$. The exponent $r=4$ is not the minimal one; the smallest power of $f$ such that the Hessian gives a matrix factorization is $2$ since the cokernel module of $\mathcal H(f)$ is annihilated by $f^2$.
\end{enumerate}
\end{exmp}

\begin{ques}
Suppose that $f$ is a homogeneous polynomial of degree $d$ whose Hessian $\mathcal H(f)$ forms a matrix factorization of some power of $f$. Let $r>0$ be the smallest power. What is the algebro-geometric meaning of $r$? Is $f$ an invariant polynomial of a prehomogeneous action?
\end{ques}

\begin{ack}
The author thanks Frank-Olaf Schreyer and Francesco Russo for the helpful discussion. This work was supported by Project I.6 of SFB-TRR 195 ``Symbolic Tools in Mathematics and their Application'' of the German Research Foundation (DFG).
\end{ack}

%\bibliography{refs-main/refs}
\def\cprime{$'$} \def\cprime{$'$} \def\cprime{$'$} \def\cprime{$'$}
  \def\cprime{$'$} \def\cprime{$'$} \def\dbar{\leavevmode\hbox to
  0pt{\hskip.2ex \accent"16\hss}d} \def\cprime{$'$} \def\cprime{$'$}
  \def\polhk#1{\setbox0=\hbox{#1}{\ooalign{\hidewidth
  \lower1.5ex\hbox{`}\hidewidth\crcr\unhbox0}}} \def\cprime{$'$}
  \def\cprime{$'$} \def\cprime{$'$} \def\cprime{$'$}
  \def\polhk#1{\setbox0=\hbox{#1}{\ooalign{\hidewidth
  \lower1.5ex\hbox{`}\hidewidth\crcr\unhbox0}}} \def\cdprime{$''$}
  \def\cprime{$'$} \def\cprime{$'$} \def\cprime{$'$} \def\cprime{$'$}
\providecommand{\bysame}{\leavevmode\hbox to3em{\hrulefill}\thinspace}
\providecommand{\MR}{\relax\ifhmode\unskip\space\fi MR }
% \MRhref is called by the amsart/book/proc definition of \MR.
\providecommand{\MRhref}[2]{%
  \href{http://www.ams.org/mathscinet-getitem?mr=#1}{#2}
}
\providecommand{\href}[2]{#2}

\vskip1cm

\end{document}